\newcommand{\monthyear}[1]{%
  \def\@monthyear{\uppercase{#1}}}
\newcommand{\volnumber}[1]{%
  \def\@volnumber{\uppercase{#1}}}
\def\ps@plain{\ps@empty
  \def\@oddfoot{\@monthyear \hfil \thepage}%
  \def\@evenfoot{\thepage \hfil \@volnumber}}
\def\ps@firstpage{\ps@plain}
\def\ps@headings{\ps@empty
  \def\@evenhead{%
    \setTrue{runhead}%
    \def\thanks{\protect\thanks@warning}%
    \uppercase{The Fibonacci Quarterly}\hfil}%
  \def\@oddhead{%
    \setTrue{runhead}%
    \def\thanks{\protect\thanks@warning}%
    \hfill\uppercase{Repdigits as products}}%
  \let\@mkboth\markboth
  \def\@evenfoot{%
    \thepage \hfil \@volnumber}%
  \def\@oddfoot{%
    \@monthyear \hfil \thepage}%
  }%
\theoremstyle{plain}
\numberwithin{equation}{section}
\newtheorem{thm}{Theorem}[section]
\newtheorem{theorem}[thm]{Theorem}
\begin{document}
\monthyear{November 2018}
\volnumber{56, 4}
\setcounter{page}{319}

\title{Repdigits as products of consecutive balancing or Lucas-balancing numbers}
\author{Sai Gopal Rayaguru}
\address{Department of Mathematics\\
                National Institute of Technology\\
                Rourkela-769008, Odisha\\
                India}
\email{saigopalrs@gmail.com}
\thanks{}
\author{Gopal Krishna Panda}
\address{Department of Mathematics\\
               National Institute of Technology\\
               Rourkela-769008, Odisha\\
               India}
\email{gkpanda\_nit@rediffmail.com}

\begin{abstract}
Repdigits are natural numbers formed by the repetition of a single digit. In this paper, we explore the presence of repdigits in the product of consecutive balancing or Lucas-balancing numbers.\\

\noindent \textbf{\small{\bf Keywords}}: Balancing numbers, Lucas-balancing numbers, repdigits, divisibility sequence.
\end{abstract}

\maketitle

\section{Introduction}
The  balancing sequence $\{B_n:n\geq0\}$ and the Lucas-balancing sequence $\{C_n:n\geq0\}$ are solutions of the binary recurrence $x_{n+1} = 6x_{n} - x_{n-1}$ with initial terms $B_0 =0, B_1 = 1$ and $C_0 =1, C_1 = 3$  respectively. The balancing sequence is a variant of the sequence of natural numbers since natural numbers are solutions of the binary recurrence $x_{n + 1} = 2x_{n} - x_{n-1}$ with initial terms $x_0 =0, x_1 = 1$. The balancing numbers have certain properties identical with those of natural numbers \cite{Ray}. It is important to note that the balancing sequence is a strong divisibility sequence, that is, $B_m \mid B_n$ if and only if $m \mid n$ \cite{Panda2009}.\\

In the year 2004, Liptai \cite{Liptai} searched for Fibonacci numbers in the balancing sequence and observed that $1$ is the only number of this type. In a recent paper \cite{Panda2012}, the second author proved that there is no perfect square in the balancing sequence other than $1$. Subsequently, Panda and Davala \cite{Panda2015} verified that $6$ is the only balancing number which is also a perfect number.\\ 

For a given integer $g>1,$ a number of the form $\textit{N}=a\big(\frac{g^m-1}{g-1}\big)$ for some $m\geq 1$ where $a\in\{1,2,\cdots,g-1\}$ is called a repdigit with respect to base $g$ or $g$-repdigit. For $g=10$, $N$ is simply called a repdigit and if, in addition, $a=1$, then $N$ is called a repunit. Luca \cite{Luca} identified the repdigits in Fibonacci and Lucas sequences. Subsequently, Faye and Luca \cite{Faye} explored all repdigits in Pell and Pell-Lucas sequences. Marques and Togb\'{e} \cite{Marques} searched for the repdigits which are product of consecutive Fibonacci numbers. In this paper, we search for repdigits in the balancing and Lucas-balancing sequences. In addition, we also explore repdigits which are product of consecutive balancing or Lucas-balancing numbers.

\section{Main Results}

In this section, we prove some theorems assuring the absence of certain class  of repdigits in the balancing and Lucas-balancing sequences. As generalizations, we also show that the product of consecutive balancing or Lucas-balancing numbers is never a repdigit with more than one digit.\\

In the balancing sequence, the first two balancing numbers $B_1=1$ and $B_2=6$ are repdigits. We have checked the next 200 balancing numbers, but none is a repdigit. The following theorem excludes the presence of some specific types of repdigits in the balancing sequence.


\begin{theorem}\label{A}
If $m,n$ and $a$ are natural numbers, $m\geq2$, $a\neq6$, and $1\leq a\leq9$, then the Diophantine equation
\begin{equation}\label{1}
B_n=a \Big( \frac{10^m-1}{9}\Big)
\end{equation}
 has no solution. 
\end{theorem}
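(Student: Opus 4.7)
The plan is to combine a light congruence sieve with the standard linear-forms-in-logarithms machinery to bound $n$, and then finish with a Baker--Davenport reduction and a finite check. I would first exploit the fact that the balancing sequence reduced modulo $10$ is purely periodic with the cycle $(0,1,6,5,4,9)$ of length $6$, so the units digit of $B_n$ lies in $\{0,1,4,5,6,9\}$; since the units digit of $a(10^m-1)/9$ is $a$ and $a\neq 6$, this forces $a\in\{1,4,5,9\}$ and pins $n\bmod 6$ to $1,4,3,5$ respectively. In every surviving case $6\nmid n$, and since the balancing sequence is a strong divisibility sequence with $11\mid B_6$ but $11\nmid B_k$ for $1\le k\le 5$, the rank of apparition of $11$ in $(B_n)$ is exactly $6$, so $11\nmid B_n$. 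Because $11\mid R_m:=(10^m-1)/9$ precisely when $m$ is even, I would conclude that $m$ must be odd, hence $m\ge 3$.

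Next I would convert equation~(\ref{1}) into a linear form in logarithms via Binet's formula $B_n=(\alpha^n-\beta^n)/(4\sqrt{2})$, with $\alpha=3+2\sqrt{2}$ and $\beta=3-2\sqrt{2}=\alpha^{-1}$. Rearranging yields
\[
\alpha^n-\frac{4\sqrt{2}\,a}{9}\,10^m \;=\; \beta^n-\frac{4\sqrt{2}\,a}{9},
\]
and the right-hand side has absolute value at most $4\sqrt{2}+1<7$ for $n\ge 1$. Dividing by $\alpha^n$ produces
\[
\Bigl|\,\tfrac{4\sqrt{2}\,a}{9}\cdot 10^m\,\alpha^{-n}-1\,\Bigr|<\frac{7}{\alpha^n}.
\]
Matveev's theorem on linear forms in three logarithms, applied in $\Q(\sqrt{2})$ to the algebraic numbers $\alpha$, $10$, and $4\sqrt{2}\,a/9$, then delivers an effective lower bound of the shape $\exp(-C(a)(1+\log n))$ on the quantity inside the absolute value; comparing with the upper bound above gives an explicit, though large, absolute constant $N_0$ with $n\le N_0$.

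To trim $N_0$ down to a manageable range I would perform a Baker--Davenport reduction on the convergents of the continued-fraction expansion of $\log 10/\log\alpha$, treating each of the four admissible values of $a$ separately, which typically shrinks the search window to only a few hundred indices. The remaining candidates are then eliminated by direct inspection of the balancing sequence, leveraging the authors' existing verification for $n\le 202$. I expect the analytic step to be the principal obstacle: Matveev's inequality has to be set up with correctly computed heights in $\Q(\sqrt{2})$, and the continued-fraction reduction requires enough floating-point precision to certify the exclusion of every surviving index, whereas the congruence sieve and the final brute-force check are essentially mechanical.
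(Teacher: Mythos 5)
Your opening sieve is correct and, in fact, closely parallels the paper's own starting point: the mod $10$ periodicity of $(B_n)$ with cycle $(0,1,6,5,4,9)$ forces $a\in\{1,4,5,9\}$, and the rank of apparition of $11$ being $6$ forces $m$ odd, exactly as the paper deduces from its tables modulo $4$, $5$, and $11$. But from there the two arguments diverge completely. The paper never leaves elementary territory: it finishes each surviving value of $a$ with one more congruence combined with strong divisibility (for $a=1$ it plays the residues mod $20$ against those mod $11$; for $a=4$ it uses $4\mid n\Rightarrow 17\mid B_4\mid B_n$ and the fact that $\mathrm{ord}_{17}(10)=16$ is even; for $a=5$ it uses $3\mid n\Rightarrow 7\mid B_3\mid B_n$ and $\mathrm{ord}_{7}(10)=6$; for $a=9$ it uses $9\mid B_n\Rightarrow 6\mid n\Rightarrow 11\mid B_n$, contradicting $m$ odd). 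You instead invoke Binet's formula, Matveev's theorem, and a Baker--Davenport reduction. That machinery is the standard route for such Diophantine equations and would in principle succeed, but it is far heavier than the problem requires, and as written your argument is only a plan: the height computations, the explicit constant $N_0$, the continued-fraction reduction, and the terminal finite verification are all announced rather than carried out, so the decisive quantitative content of the proof is missing. It is worth noting that the congruence information you have already assembled nearly closes several cases on its own; for instance, for $a=9$ your units-digit condition gives $n\equiv 5\pmod 6$, while $9\mid B_n$ forces $6\mid n$, an immediate contradiction with no transcendence theory needed. I would encourage you either to push the elementary sieve to completion, as the paper does, or, if you retain the analytic route, to actually execute the Matveev and reduction steps with explicit constants.
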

\begin{proof}
 To prove this theorem, we need all the least residues of the balancing sequence modulo $3,4,5,7,8,9,11$ and $20$ (see \cite{Panda2014}). We list them in the following table.\\

\begin{center}
\begin{table}[ht]
\begin{tabular}{|c|c|c|c|}
\hline
Row no. & $m$ & $B_n$ mod $m$ & Period \\
[0.5ex]
\hline
$1$ & $3$ & $0, 1, 0, 2$ & $4$ \\ 
\hline
$2$ & $4$ & $0,1,2,3$ & $4$ \\
\hline
$3$ & $5$ & $0,1,1,0,4,4$ & $6$ \\
\hline
$4$ & $7$ & $0,1,6$ & $3$ \\
\hline
$5$ & $8$ & $0,1,6,3,4,5,2,7$ & $8$ \\
\hline
$6$ & $9$ & $0,1,6,8,6,1,0,8,3,1,3,8$ & $12$ \\
\hline
$7$ & $11$ & $0,1,6,2,6,1,0,10,5,9,5,10$ & $12$ \\
\hline
$8$ & $20$ & $0,1,6,15,4,9,10,11,16,5,14,19$ & $12$ \\
\hline
\end{tabular}
\vspace{0.2cm}
\caption{}
\label{tab:1}
\end{table}
\end{center}

Since $m\geq2$, it follows that $n\geq3$. We claim that $m$ is odd. Observe that if $m$ is even, then $$11\mid \frac{10^m-1}{9}\mid B_n$$ and from the seventh row of Table~\ref{tab:1}, it follows that $6\mid n$ and consequently $B_6\mid B_n$. Since $10\mid B_6$, it follows that $10\mid B_n=a \cdot \frac{10^m-1}{9}$, which is a contradiction. Now, to complete the proof, we distinguish eight different cases corresponding to the values of $a$.
\vspace{0.3 cm}\\
\textbf{Case I: $a=1$}. Assume that $B_n$ is of the form $\frac{10^m-1}{9}$ for some $m$. Since $m$ is odd, $B_n \equiv 1~ (\text{mod}~ 11)$ and also $B_n \equiv 11~ (\text{mod}~ 20)$. From the last row of Table~\ref{tab:1}, it follows that if $B_n \equiv 11~ (\text{mod}~ 20)$ then $n\equiv 7~ (\text{mod}~ 12)$. But, from the seventh row of Table~\ref{tab:1}, it follows that whenever $n\equiv 7~ (\text{mod}~ 12)$, $B_n\equiv 10~ (\text{mod}~ 11)-$a contradiction to  $B_n\equiv 1~ (\text{mod}~ 11)$. Hence, no $B_n$ is of the form $\frac{10^m-1}{9}$.
\vspace{0.3 cm}\\
\textbf{Case II: $a=2$}. If $B_n=2 \cdot \frac{10^m-1}{9}$, then $B_n \equiv 2~ (\text{mod}~ 5)$. But, in view of the third row of Table~\ref{tab:1}, it follows that for no value of $n$,  $B_n\equiv 2~(\text{mod}~ 5).$ Hence, $B_n$ cannot be of the form $2 \cdot \frac{10^m-1}{9}.$
\vspace{0.3 cm}\\
\textbf{Case III: $a=3$}. If $B_n=3 \cdot \frac{10^m-1}{9}$, then $B_n \equiv 0~(\text{mod}~ 3)$. But, in view of the first row of Table~\ref{tab:1}, $n \equiv 0,2~ (\text{mod}~ 4)$. So, $B_2 \mid B_n$ and consequently $2 \mid \frac{10^m-1}{9}$, which is a contradiction. Hence, $B_n$ cannot be of the form $3 \cdot \frac{10^m-1}{9}.$
\vspace{0.3 cm}\\
\textbf{Case IV: $a=4$}. If $B_n=4 \cdot \frac{10^m-1}{9}$, then $B_n \equiv 0 ~(\text{mod}~ 4)$ and in view of the second row of Table~\ref{tab:1}, $ 4 \mid n$ which implies $B_4 \mid B_n.$ Since $17 \mid B_4,$ it follows that $17 \mid (10^m-1).$ But this is possible if $16 \mid m,$  which is a contradiction since $m$ is odd. Hence, $B_n$ cannot be of the form $B_n=4 \cdot \frac{10^m-1}{9}.$
\vspace{0.3 cm}\\
\textbf{Case V: $a=5$}. If $B_n=5 \cdot \frac{10^m-1}{9}$, then $B_n \equiv 0 ~(\text{mod}~ 5)$ and in view of the third row of Table~\ref{tab:1}, this is possible only if $3 \mid n.$ Hence, $B_3 \mid B_n$ and since $7 \mid B_3,$ it follows that $7 \mid \frac{10^m-1}{9}$ which implies that $ 6 \mid m,$ a contradiction since $m$ is odd. Hence, $B_n$ cannot be of the form $B_n=5 \cdot \frac{10^m-1}{9}.$
\vspace{0.3 cm}\\
\textbf{Case VI: $a=7$}. If $B_n=7 \cdot \frac{10^m-1}{9}$, then $B_n \equiv 0 ~(\text{mod}~ 7)$ and in view of the fourth row of Table~\ref{tab:1}, this is possible only if $3 \mid n.$ Hence, $B_3 \mid B_n$ and since $5 \mid B_3,$ it follows that $5 \mid \frac{10^m-1}{9},$ which is a contradiction. Hence, $B_n$ cannot be of the form $B_n=7 \cdot \frac{10^m-1}{9}$.
\vspace{0.3 cm}\\
\textbf{Case VII: $a=8$}. If $B_n=8 \cdot \frac{10^m-1}{9}$, then $B_n \equiv 0 ~(\text{mod}~ 8)$ and in view of the fifth row of Table~\ref{tab:1}, this is possible only if $8 \mid n.$ Hence, $B_8 \mid B_n$ and since $17 \mid B_8,$ it follows that $17 \mid (10^m-1).$ But this is possible if $16 \mid m,$  which is a contradiction since $m$ is odd. Hence, $B_n$ cannot be of the form $B_n=8 \cdot \frac{10^m-1}{9}$.
\vspace{0.3 cm}\\
\textbf{Case VIII: $a=9$}. If $B_n=9 \cdot \frac{10^m-1}{9}$, then $B_n \equiv 0 ~(\text{mod}~ 9)$ and in view of the sixth row of Table~\ref{tab:1}, this is possible only if $6 \mid n.$ Consequently, $B_6 \mid B_n$ and since $11 \mid B_6,$ it follows that $11 \mid \frac{10^m-1}{9}$. But this is possible only if $m$ is even, which is a  contradiction since $m$ is odd. Hence, $B_n$ cannot be of the form $B_n=9 \cdot \frac{10^m-1}{9}$.\\

Thus, \eqref{1} has no solution if $m\geq2$ and $a\neq6$. This completes the proof.
\end{proof}

We next study the presence of repdigits in the products of consecutive balancing numbers. The product $B_1B_2=6$ is a repdigit. So a natural question is: "Is there any other repdigit which is a consecutive product of balancing numbers?" In the following theorem, we answer this question in negative.\\

 
\begin{theorem}\label{B}
If $m,n,k$ and $a$ are natural numbers such that $m>1$ and $1\leq a\leq9$, then the Diophantine equation
\begin{equation}\label{2}
B_nB_{n+1}\cdots B_{n+k}=a \Big( \frac{10^m-1}{9} \Big)
\end{equation}
has no solution.
\end{theorem}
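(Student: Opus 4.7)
The plan proceeds in three stages: reduce to $k=1$, force $a=6$, and then eliminate the remaining case modulo $25$.

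First, suppose $k\geq 2$, so that the product $P=B_nB_{n+1}\cdots B_{n+k}$ contains at least three consecutive balancing numbers. Among any three consecutive positive integers at least one is divisible by $3$ and at least one is divisible by $2$, so by the strong divisibility property $B_3=35$ divides some factor and $B_2=6$ divides another. Thus $5\mid P$ and $2\mid P$, whence $10\mid P$. But the repdigit $N=a(10^m-1)/9$ ends in the nonzero digit $a$, so $10\nmid N$, a contradiction. Hence $k=1$.

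Now $P=B_nB_{n+1}$. One of $n,n+1$ is even, so $B_2=6\mid P$ and in particular $2\mid P$; since $R_m:=(10^m-1)/9$ is odd, the digit $a$ must be even, i.e.\ $a\in\{2,4,6,8\}$. From the third row of Table~\ref{tab:1} one sees that $B_nB_{n+1}\equiv 0\pmod 5$ except when $n\equiv 1,4\pmod 6$, in which case $B_nB_{n+1}\equiv 1\pmod 5$. If $5\mid P$ then $5\mid aR_m$, and since $R_m\equiv 1\pmod 5$ this forces $5\mid a$, which is impossible for even $a\leq 9$. Hence $n\equiv 1,4\pmod 6$ and $P\equiv 1\pmod 5$. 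Since $N\equiv a\pmod 5$, we obtain $a\equiv 1\pmod 5$, so $a\in\{1,6\}$; combined with $a$ even, $a=6$.

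It remains to rule out $B_nB_{n+1}=6R_m$ with $m\geq 2$ and $n\equiv 1,4\pmod 6$. I would work modulo $25$. For $m\geq 2$ we have $10^m\equiv 0\pmod{25}$, and since $9\cdot 14\equiv 1\pmod{25}$ one gets $R_m\equiv -14\equiv 11\pmod{25}$, hence $N=6R_m\equiv 66\equiv 16\pmod{25}$. On the other hand, the balancing sequence has period $30$ modulo $25$, as one checks directly from the recurrence $B_{n+1}=6B_n-B_{n-1}$; running through one full period shows that for each of the ten admissible residues $n\equiv 1,4\pmod 6$ in $\{0,1,\ldots,29\}$ one has $B_nB_{n+1}\equiv 6\pmod{25}$. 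Since $6\not\equiv 16\pmod{25}$, no solution can exist.

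The main obstacle is the final stage. None of the moduli tabulated in Table~\ref{tab:1} (which stop at $20$) is strong enough: modulo $20$ the repdigit $N$ is congruent to $6$ for every $m\geq 1$, and $6$ is one of the two residues attained by $B_nB_{n+1}$ on the admissible classes of $n$. The essential observation is that $10^m$ vanishes modulo $25$ for $m\geq 2$, so $R_m$ and hence $N$ freeze at a single residue class independent of $m$, while $B_nB_{n+1}$ simultaneously freezes at a different residue on the relevant progressions of $n$; identifying $25$ as the separating modulus is the crux of the argument.
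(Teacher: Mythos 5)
Your proposal is correct and follows the same structure as the paper's proof: eliminate $k\geq 2$ via $10\mid B_nB_{n+1}\cdots B_{n+k}$, use parity and a congruence modulo $5$ to force $a=6$, and finish with a single congruence obstruction. The only divergence is the final modulus --- you work modulo $25$ (where $6R_m\equiv 16$ but $B_nB_{n+1}\equiv 6$ on the admissible classes $n\equiv 1,4\pmod 6$, period $30$), whereas the paper tabulates $B_nB_{n+1}$ modulo $100$ over a period of $60$ and observes that $66$ never occurs; your choice is a mild streamlining of the same idea, and all of your computations check out.
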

\begin{proof}

Firstly, we show that \eqref{2} has no solution for $k \geq 2$. Assume to the contrary that \eqref{2} has a solution in positive integers $n,m,a$ for $k\geq2$. Then, $2 \mid (n+i)$ and $3 \mid (n+j)$ for some $i,j \in\{0,1,\ldots,k\}$. Since $2 \mid B_2$ and $5 \mid B_3$, it follows that $2 \mid B_{n+i}$ and $5 \mid B_{n+j}$. Hence, $10 \mid B_nB_{n+1}\cdots B_{n+k}=a \Big( \frac{10^m-1}{9} \Big)$, which is a contradiction. Hence, \eqref{2} has no solution for $k \geq 2$. \\

We next show that \eqref{2} has no solution if $k=1$. If $k=1$, \eqref{2} reduces to 
$$B_nB_{n+1} = a \Big( \frac{10^m-1}{9} \Big).$$
One of $n$ and $n+1$ is even and consequently, either $B_n$ or $B_{n+1}$ is also even. Hence, $a \in\{2,4,6,8\}$. Since $m>1$, $B_nB_{n+1}\geq11$ and hence $n$ must be greater than $1$.\\
 
In the following table we list all the least residues of $B_nB_{n+1}$ modulo $5$ and $100$, which will be useful in the proof.

\begin{center}
\begin{table}[ht]
\begin{tabular}{|c|c|c|} 
\hline
$m$ & $B_nB_{n+1}$ mod $m$ & Period \\
[0.5ex]
\hline
$5$ & $0,1,0$ & $3$ \\ 
\hline
 & $0,6,10,40,56,70,30,56,80,70,6,40,60,6,50,0,56,10,90,56,$ &  \\
$100$ & $20,30,6,80,20,6,90,60,56,50,50,56,60,90,6,20,80,6,30,20,$ & $60$ \\
  & $56,90,10,56,0,50,6,60,40,6,70,80,56,30,70,56,40,10,6,0$ &  \\ [0.5ex] 
\hline
\end{tabular}
\vspace{0.2cm}
\caption{}
\label{tab:2}
\end{table}
\end{center}

If $a=2$ or $a=4$, then $$B_nB_{n+1}=a \cdot\frac{10^m-1}{9}\equiv a ~(\text{mod}~ 5).$$\\
If $a=8$, then
$$B_nB_{n+1}=8 \cdot\frac{10^m-1}{9}\equiv 3 ~(\text{mod}~ 5).$$\\
Similarly, if $a=6$, then 
$$B_nB_{n+1} = 6 \cdot\frac{10^m-1}{9} \equiv 66 ~(\text{mod}~ 100).$$
Since the least residues of the last three congruences do not appear in the appropriate row of Table~\ref{tab:2}, it follows that $B_nB_{n+1}$ is not a repdigit if $n>1$. This completes the proof.  
\end{proof}
\vspace{0.2 cm}

In Theorem \ref{A}, we proved the absence of certain type of repdigits in the sequence of balancing numbers. However, in case of Lucas-balancing numbers, $C_1=3$ and $C_3=99$ are two known repdigits. Thus, a natural question is: "Does this sequence contain any other larger repdigit?" In the following theorem, we answer this question in negative.\\


\begin{theorem}\label{C}
If $m,n$ and $a$ are natural numbers and $1\leq a\leq9$, then the Diophantine equation
\begin{equation}\label{3}
C_n=a \Big( \frac{10^m-1}{9}\Big)
\end{equation}
 has the only solutions $(m,n,a)=(1,1,3),(2,3,9)$. 
\end{theorem}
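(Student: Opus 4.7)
The plan is to mirror the strategy of Theorem~\ref{A}: reduce to a handful of admissible digits via periodic residues of $\{C_n\}$, and then resolve the remaining cases by direct inspection for small $m$ together with a single modulo-$8$ argument for larger $m$. First I would show by induction from $C_0=1,\,C_1=3$ and the recurrence $C_{n+1}=6C_n-C_{n-1}$ that every $C_n$ is odd (the term $6C_n$ does not alter the parity inherited from $C_{n-1}$), which immediately rules out the four even values of $a$. Next, computing the periods of $C_n$ modulo $5$ and modulo $7$, namely $1,3,2,4,2,3$ of length $6$ and $1,3,3$ of length $3$ respectively, I would observe that $C_n$ never vanishes modulo $5$ or modulo $7$. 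This eliminates $a=5$ and $a=7$, leaving only $a\in\{1,3,9\}$.

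For $m=1$ and $m=2$ I would simply enumerate. Since $C_1=3,\,C_2=17,\,C_3=99,\,C_4=577$, the only match among the nine possible values of $a$ for $m=1$ is $C_1=3$, and the only match among $\{11,22,\ldots,99\}$ for $m=2$ is $C_3=99$; these recover precisely the two advertised solutions $(m,n,a)=(1,1,3)$ and $(2,3,9)$. For $m\geq 3$ I would reduce modulo $8$: a short induction on the recurrence shows $C_n\bmod 8\in\{1,3\}$ (in fact the sequence alternates $1,3,1,3,\ldots$). On the other hand, the repunit $R_m=(10^m-1)/9$ obeys $R_m\equiv 2R_{m-1}+1\pmod 8$ and one checks that $R_m\equiv 7\pmod 8$ for every $m\geq 3$. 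Consequently $aR_m\equiv 7a\pmod 8$ takes the values $7,\,5,\,7$ for $a=1,3,9$ respectively, none of which lies in $\{1,3\}$, contradicting $C_n=aR_m$.

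The step I expected to be hardest is the case $a=9$, precisely because the genuine solution $C_3=99$ obstructs any uniform modular contradiction valid for all $n$. Happily, the modulo-$8$ barrier activates exactly at $m\geq 3$, so the exceptional small cases $m=1,2$ can be dispatched by direct inspection of the first few terms, and no deeper Diophantine machinery (such as linear forms in logarithms) is required.
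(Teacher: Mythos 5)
Your proposal is correct and follows essentially the same route as the paper: oddness of $C_n$ eliminates even $a$, the zero-free residue cycles modulo $5$ and $7$ eliminate $a=5,7$, and the observation that $C_n\equiv 1,3\pmod 8$ while $a\cdot\frac{10^m-1}{9}\equiv 5,7\pmod 8$ for $m\geq 3$ disposes of $a\in\{1,3,9\}$. The only (harmless) difference is bookkeeping: you split cases by $m\leq 2$ versus $m\geq 3$, whereas the paper splits by $n\leq 3$ versus $n\geq 4$ and then infers $m\geq 3$.
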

\begin{proof}
To prove this theorem, we need all the least residues of the Lucas-balancing sequence modulo $5,7$ and $8$. We list them in the following table.\\

\begin{center}
\begin{table}[ht]
\begin{tabular}{|c|c|c|c|} 
\hline
Row no. & $m$ & $C_n$ mod $m$ & Period \\
[0.5ex]
\hline
$1$ & $5$ & $1,3,2,4,2,3$ & $6$ \\
\hline
$2$ & $7$ & $1,3,3$ & $3$ \\
\hline
$3$ & $8$ & $1,3$ & $2$ \\
\hline
\end{tabular}
\vspace{0.2cm}
\caption{}
\label{tab:3}
\end{table}
\end{center}

Among the first three Lucas-balancing numbers $C_1=3$ and $C_3=99$ are repdigits and \eqref{3} is satisfied for $(m,n,a)=(1,1,3),(2,3,9)$. Now, let  $n\geq 4$ and hence $m\geq 3$.  Since $C_n$ is always odd, $a \in \{ 1,3,5,7,9\}.$ Since no zero appears in the first two rows of Table~\ref{tab:3}, it follows that $C_n$ is not divisible by $5$ or $7$ and hence the possible values of $a$ are limited to $1,3,9$.\\

If $a\in \{1,9\}$, then
$$C_n=a \cdot\frac{10^m-1}{9}\equiv 10^m-1 \equiv 7 ~(\text{mod}~ 8).$$ Similarly, if $a=3$, then
$$C_n=3 \cdot \frac{10^m-1}{9}\equiv 5 ~(\text{mod}~ 8).$$
Since, the least residues $5$ and $7$ do not appear in the last row of Table~\ref{tab:3}, it follows that \eqref{3} has no solution for $n>3$. This completes the proof.
\end{proof}

In Theorem \ref{B}, we noticed that no product of consecutive balancing numbers is a repdidit with more than one digit, though the only product $B_1B_2=6$ is a single digit repdigit. The following theorem negates the possibility of any repdigit as product of consecutive Lucas-balancing numbers.\\ 


\begin{theorem}\label{D}
If $m,n,k$ and $a$ are natural numbers and $1\leq a\leq9$, then the Diophantine equation
\begin{equation}\label{4}
C_nC_{n+1}\cdots C_{n+k}=a \Big( \frac{10^m-1}{9} \Big)
\end{equation}
has no solution.
\end{theorem}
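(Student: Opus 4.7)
My plan is to follow the template of Theorem~\ref{C}: first narrow the possible values of the digit $a$ by local congruence obstructions inherited from the Lucas-balancing sequence, then dispose of the small values of $m$ by a direct size comparison, and finally kill the remaining generic case by a single congruence modulo~$8$.

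The reduction of $a$ runs as follows. Row~3 of Table~\ref{tab:3} shows that every $C_n$ is odd, so the left-hand side of~\eqref{4} is odd and $a$ must be odd. Rows~1 and~2 of the same table show that no $C_n$ is divisible by $5$ or $7$, hence neither is the product $C_nC_{n+1}\cdots C_{n+k}$. Since the repunit $(10^m-1)/9$ is coprime to $5$ for every $m$, the congruence $a(10^m-1)/9\equiv 0\pmod 5$ would force $a=5$, which is thereby excluded; and $a=7$ would make the right-hand side divisible by $7$, again a contradiction. Thus $a\in\{1,3,9\}$.

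For $m=1$, the right-hand side is at most $9$, while the left-hand side, having at least two factors from $C_1,C_2,\ldots$, is at least $C_1C_2=51$. For $m=2$, the right-hand side is at most $99$, so the only candidate product is again $C_1C_2=51$, which is not a two-digit repdigit. We may therefore assume $m\geq 3$ from now on.

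For $m\geq 3$, I would close the argument with a single congruence modulo~$8$. Row~3 of Table~\ref{tab:3} gives $C_n\equiv 1\pmod 8$ when $n$ is even and $C_n\equiv 3\pmod 8$ when $n$ is odd, so $C_nC_{n+1}\cdots C_{n+k}\equiv 3^j\pmod 8$, where $j$ is the number of odd indices among $n,\ldots,n+k$; since $3^2\equiv 1\pmod 8$, this product is $\equiv 1$ or $3\pmod 8$. On the other hand, $10^m\equiv 0\pmod 8$ and $9\equiv 1\pmod 8$ give $(10^m-1)/9\equiv 7\pmod 8$, so the right-hand side is $\equiv 7a\pmod 8$, namely $7,5,7$ for $a=1,3,9$ respectively. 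Since $\{5,7\}\cap\{1,3\}=\emptyset$, this produces the required contradiction. The only mildly delicate point I anticipate is the small-$m$ bookkeeping: the cases $m=1,2$ survive every local obstruction and must be cleared by size alone, so one should make sure that the conventions $n\geq 1$ and $k\geq 1$ really do force the product to be at least $C_1C_2=51$.
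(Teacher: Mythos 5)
Your proof is correct and follows essentially the same route as the paper's: both arguments rest on the facts that every $C_n$ is odd and never divisible by $5$ or $7$, and that for $m\geq 3$ the product is $\equiv 1$ or $3 \pmod 8$ while the right-hand side is $\equiv 7a \pmod 8$. The only differences are cosmetic --- you eliminate $a=5,7$ by the divisibility obstruction before applying the mod-$8$ step, whereas the paper uses mod $8$ first to reduce to $a\in\{5,7\}$ and then invokes divisibility, and you spell out the $m=1,2$ size check that the paper dismisses as easy.
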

\begin{proof}
All the Lucas-balancing numbers are odd and in view of \eqref{4}, $a \in\{1,3,5,7,9\}$. It is easy to see that \eqref{4} has no solution if $m=1,2$. In the following table we list all the nonnegative residues of Lucas-balancing numbers and their consecutive product  modulo $5,7$ and $8$ which will play an important role in proving this theorem.\\

\begin{center}
\begin{table}[ht]
\begin{tabular}{|c|c|c|} 
\hline
$m$ & $C_n$ mod $m$ &$C_nC_{n+1}\cdots C_{n+k}$ mod $m$ \\
[0.5ex]
\hline
$5$ & $1,3,2,4,2,3$ & $\in\{1,2,3,4\}$ \\ 
\hline
$7$ & $1,3,3$ & $\in\{1,2,3,4,5,6\}$ \\
\hline
$8$ & $1,3$ & $\in\{1,3\}$ \\[0.5ex] 
\hline
\end{tabular}
\vspace{0.2cm}
\caption{}
\label{tab:4}
\end{table}
\end{center}

For $m\geq 3$, $C_nC_{n+1}\cdots C_{n+k}=a \Big( \frac{10^m-1}{9} \Big) \equiv 7a~(\text{mod}~ 8)$. But from the last row of Table~\ref{tab:4}, it follows that $7a \equiv 1,3~(\text{mod}~ 8)$ and hence $a=5$ or $a=7$. Now, reducing \eqref{4} modulo $a$ we get $ C_nC_{n+1}\cdots C_{n+k} \equiv 0~(\text{mod}~ a)$. Since, $0$ does not appear as a residue of $C_nC_{n+1}\cdots C_{n+k}$ modulo $5$ or $7$, it follows that \eqref{4} has no solution for $m\geq3$. This completes the proof.\\
\end{proof}

\section{Conclusion}
In the last section, we noticed that the Lucas-balancing sequence contains only two repdigits namely $C_1=3$ and $C_3=99$, while we could not explore all repdigits in the balancing sequence. In Theorem \ref{A}, we proved that $B_n$ is not a repdigit ($B_n\neq a \Big(\frac{10^m-1}{9}\Big)$), with more than one digit, if $a\neq6$. Thus, repdigits in the balancing sequence having all digits $6$ is yet unexplored. In this connection, one can verify that if $n\not\equiv 14 ({\text{mod}}~96)$ then $B_n$ is not a repdigit. Further, if $m\not\equiv 1({\text{mod}}~6)$, then also $B_n$ is not a repdigit. We believe that, $B_1=1$ and $B_2=6$ are the only repdigits in the balancing sequence. It is still an open problem to prove the existence or nonexistence of repdigits that are 6 times of some repunit other than $B_2=6$.
\vspace{0.5cm}\\    
\textbf{Acknowledgment:} It is a pleasure to thank the anonymous referee whose comments helped us in improving the paper to a great extent.\\

\medskip

\noindent MSC2010: 11B39, 11A63, 11B50.

\end{document}